\theoremstyle{definition}
\newtheorem{definition}{Definition}[section]
\theoremstyle{plain}
\newtheorem{teo}[definition]{Theorem}
\newtheorem{cor}[definition]{Corollary}
\newtheorem{lem}[definition]{Lemma}
\newtheorem{conj}[definition]{Conjecture}
\newtheorem{claim}{Claim}
\newcommand{\numberset}{\mathbb}
\newcommand{\Z}{\numberset{Z}}
\title{On removable edge subsets in graphs with a nowhere-zero $4$-flow}
\author{D. Mattiolo\thanks{Department of Computer Science, KU Leuven Kulak, 8500 Kortrijk, Belgium. Email: \href{mailto:davide.mattiolo@kuleuven.be}{davide.mattiolo@kuleuven.be}}}
\begin{document}
	\maketitle

	\begin{abstract}
	   A set $R\subseteq E(G)$ of a graph $G$ is \emph{$k$-removable} if $G-R$ has a nowhere-zero $k$-flow. We prove that every graph $G$ admitting a nowhere-zero $4$-flow has a $3$-removable subset consisting of at most $\frac{1}{6}|E(G)|$ edges.
       This gives a positive answer to a conjecture of M.\ DeVos, J.\ McDonald, I.\ Pivotto, E.\ Rollov\'a and R.\ \v S\'amal [\emph{$3$-Flows with large support}, J.\ Comb.\ Theory Ser.\ B 144 (2020), 32-80] in the case of graphs admitting a nowhere-zero $4$-flow.
       
        Moreover, Hoffmann-Ostenhof recently conjectured that every cubic graph with a nowhere-zero $4$-flow has a $4$-removable edge. Bipartite cubic graphs verify this conjecture.
        Our result gives an approximation for Hoffmann-Ostenhof's Conjecture in the non-bipartite case.

        Finally, for cubic graphs, our result implies that every $3$-edge-colorable cubic graph $G$ contains a subgraph $H$ whose connected components are either cycles or subdivisions of bipartite cubic graphs, such that $|E(H)|\ge \frac{5}{6}|E(G)|$.
	\end{abstract}

{\small \noindent {\bf Key words}: removable edges, nowhere-zero flows, flow-continuous maps, $3$-edge-coloring Conjecture.

\noindent {\bf MSC}: 05C21, 05C70.}
	
\section{Introduction} 

Let $A$ be an (additive) abelian group, $G$ a graph and $\vec{G}$ an orientation of $G$. An $A$-\emph{flow} on $\vec{G}$ is a function $\psi\colon E(\vec{G})\to A$ such that, for every vertex, the sum of all incoming flow values equals the sum of all outgoing ones.
For an integer $k\ge2$, a \emph{$k$-flow} on $\vec{G}$ is a $\Z$-flow $\psi$ on $\vec{G}$ such that $|\psi(z)|\le k-1$ for all $z\in E(\vec{G})$. Moreover, such a flow $\psi$ is called a \emph{nowhere-zero} $k$-flow, or $k$-NZF, if $\psi(z)\ne0$ for all $z\in E(\vec{G})$. It is worth noting that the existence of an $A$-flow $\psi$ does not depend on the chosen orientation $\vec{G}$; indeed if we reverse the orientation of any arc $z\in E(\vec{G})$ and replace the flow value of $\psi(z)$ with its opposite in $A$, we still have an $A$-flow. Thus, we say that $G$ has an $A$-flow if some orientation of $G$ has one.
The \emph{flow number} $\phi(G)$ of $G$ is defined as the smallest number $h$ such that $G$ has a $h$-NZF.
The following result is due to Tutte.

\begin{teo}[\cite{Tutte_imbedding},\cite{Tutte_contribution}]\label{teo:charact_cubic_wrt_flows}
	Let $G$ be a cubic (i.e.\ $3$-regular) graph.
	\begin{itemize}
		\item $G$ is $3$-edge-colorable if and only if $G$ has a $4$-NZF;
		
		\item $G$ is bipartite if and only if $G$ has a $3$-NZF.
	\end{itemize}
\end{teo}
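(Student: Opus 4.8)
The plan is to pass from integer flows to group-valued flows via Tutte's group-independence theorem, which asserts that for any finite abelian group $A$ with $|A|=k$, a graph has a nowhere-zero $k$-flow if and only if it has a nowhere-zero $A$-flow. Granting this, I would prove the first equivalence using $A=\Z_2\times\Z_2$ and the second using $A=\Z_3$, in each case reducing the claim to a purely local condition at the degree-$3$ vertices. Since every nonzero element of $\Z_2\times\Z_2$ is its own inverse, an $A$-flow in this case does not depend on the orientation, which makes the analysis especially clean.

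For the first equivalence, suppose $G$ has a nowhere-zero $\Z_2\times\Z_2$-flow $\psi$. At a vertex $v$ the three incident edges carry values $a,b,c\in(\Z_2\times\Z_2)\setminus\{0\}$ with $a+b+c=0$. If two of them coincided the third would be $0$, contradicting the nowhere-zero hypothesis; hence $a,b,c$ are the three distinct nonzero elements. Reading these three group elements as colours thus yields a proper $3$-edge-colouring, and conversely any proper $3$-edge-colouring --- placing the three distinct nonzero elements of $\Z_2\times\Z_2$ around each vertex, whose sum is $0$ --- is a nowhere-zero $\Z_2\times\Z_2$-flow. Combined with group-independence, this gives the equivalence between having a $4$-NZF and being $3$-edge-colourable.

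For the second equivalence, suppose $G$ has a nowhere-zero $\Z_3$-flow. After reversing the orientation of the edges carrying the value $2=-1$ in $\Z_3$, I may assume every edge carries value $1$. Writing $d^+(v)$ and $d^-(v)=3-d^+(v)$ for the out- and in-degrees, conservation at $v$ reads $d^+(v)-d^-(v)\equiv 0\pmod 3$, that is $2d^+(v)\equiv 0$, which forces $d^+(v)\in\{0,3\}$. Thus every vertex is a source or a sink, every edge runs from a source to a sink, and the source/sink partition is a bipartition of $G$. Conversely, if $G$ is bipartite with parts $X,Y$, orienting all edges from $X$ to $Y$ and assigning value $1$ to each gives a nowhere-zero $\Z_3$-flow, since at each vertex the incident values sum to $\pm 3\equiv 0$. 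With group-independence this yields the equivalence between being bipartite and having a $3$-NZF.

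The genuinely substantive ingredient is the group-independence theorem itself; it is the main obstacle, and it is exactly the nontrivial step attributed to Tutte, typically established by showing that the number of nowhere-zero $A$-flows is a polynomial in $|A|$ via a deletion--contraction argument. Once it is available, the two equivalences reduce to the elementary local computations above, using only that a cubic graph has exactly three edges at each vertex and that $\Z_2\times\Z_2$ has exactly three nonzero elements (summing to $0$), while in $\Z_3$ the relation $2x\equiv 0$ forces $x\equiv 0$.
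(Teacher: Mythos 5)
The paper does not prove this statement: it is quoted from Tutte's work with citations only, so there is no ``paper's proof'' to compare against. Your argument is the standard textbook derivation and it is correct. The two local computations are sound: in $\Z_2\times\Z_2$ three nonzero values at a cubic vertex summing to $0$ must be pairwise distinct (two equal values would cancel, forcing the third to vanish), and the three nonzero elements themselves sum to $0$, giving the equivalence with proper $3$-edge-colourings; in $\Z_3$, after reorienting so that every edge carries the value $1$, conservation forces $2d^+(v)\equiv 0 \pmod 3$ and hence $d^+(v)\in\{0,3\}$ at each cubic vertex, so the sources and sinks form the two sides of a bipartition, and conversely orienting a bipartite cubic graph across the bipartition with constant value $1$ is conservative. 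The one substantive ingredient you do not prove is Tutte's group-independence theorem (equivalence of nowhere-zero $k$-flows and nowhere-zero $A$-flows for $|A|=k$); you identify it explicitly and correctly indicate its standard proof via the flow polynomial and deletion--contraction, which is an acceptable way to structure the argument since that theorem is itself part of the cited Tutte results. In short: correct, with the genuinely hard step properly isolated and attributed rather than hidden.
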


A set $R\subseteq E(G)$ of graph $G$ is called \emph{$k$-removable} if $G-R$ has a $k$-NZF. If $R=\{e\}$, we say that the edge $e$ is $k$-removable.
The main result of this note is the following theorem.

\begin{teo}\label{teo:main}
       Every graph $G$ admitting 
       a $4$-NZF has a $3$-removable set $R$ such that $|R|\le \frac{1}{6}|E(G)|$.
\end{teo}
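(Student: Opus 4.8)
The plan is to pass from the nowhere-zero $4$-flow to the language of $\Z_2\times\Z_2$-flows and then to build a $\Z_3$-flow on $G$ whose zero set is small enough to serve as $R$. By Tutte's equivalence of nowhere-zero $k$-flows and nowhere-zero $A$-flows for $|A|=k$, the hypothesis yields a nowhere-zero $\Z_2\times\Z_2$-flow $\psi$. Writing the three nonzero elements of $\Z_2\times\Z_2$ as $a,b,c$ (so $a+b+c=0$), this is the same as a partition $E(G)=C_a\sqcup C_b\sqcup C_c$ into color classes such that each pairwise union $C_a\cup C_b$, $C_b\cup C_c$, $C_a\cup C_c$ is an even subgraph, since the two coordinates of $\psi$ and their sum are $\Z_2$-flows. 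I would relabel so that $C_a$ is a smallest class; as the three classes partition $E(G)$, this gives $|C_a|\le\tfrac13|E(G)|$.

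Set $C_1=C_a\cup C_b$ and $C_2=C_a\cup C_c$, two even subgraphs with $C_1\cup C_2=E(G)$ and $C_1\cap C_2=C_a$. Since every vertex of an even subgraph has even degree, each of $C_1,C_2$ decomposes into edge-disjoint cycles; orienting each such cycle cyclically and assigning the value $1\in\Z_3$ gives a nowhere-zero $\Z_3$-flow, and independently scaling each cycle by a sign $\sigma\in\{\pm1\}$ again does. Extending by $0$ off its support, each such flow is a $\Z_3$-flow on all of $G$, so for any choice of signs the superposition $\phi:=\phi_1+\phi_2$ is a $\Z_3$-flow on $G$. On the edges lying in exactly one of $C_1,C_2$, that is on $C_1\triangle C_2=C_b\cup C_c$, exactly one summand is nonzero, so $\phi$ is automatically nonzero there; hence every zero of $\phi$ lies in $C_1\cap C_2=C_a$, where $\phi_1(e),\phi_2(e)\in\{\pm1\}$ and $\phi(e)=0$ precisely when they have opposite signs. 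I would then let $R$ be this set of cancellations, so that $\phi$ restricts to a nowhere-zero $\Z_3$-flow on $G-R$ and, by Tutte again, $G-R$ has a $3$-NZF.

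The crux, and the main obstacle, is to choose the cycle-signs so that few cancellations occur in $C_a$. For $e\in C_a$ let $a_e$ be the cycle of the fixed decomposition of $C_1$ through $e$ and $b_e$ that of $C_2$; relative to a fixed reference orientation, cancellation at $e$ holds iff $\sigma(a_e)\sigma(b_e)=-\epsilon_1(e)\epsilon_2(e)$, where $\epsilon_1,\epsilon_2\in\{\pm1\}$ are fixed by $\psi$ and the orientations. Because $a_e$ is a cycle of $C_1$ and $b_e$ a cycle of $C_2$, their signs lie in two disjoint families of variables; choosing all signs independently and uniformly at random makes $\sigma(a_e)\sigma(b_e)$ uniform in $\{\pm1\}$, so $e$ cancels with probability exactly $\tfrac12$, and by linearity the expected number of cancellations is $\tfrac12|C_a|$. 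Thus some sign choice produces $R\subseteq C_a$ with $|R|\le\tfrac12|C_a|\le\tfrac16|E(G)|$, giving the sharp constant. The difficulty is genuinely concentrated here: a careless superposition can cancel on up to half of the \emph{whole} shared overlap, and it is the combination of routing all possible cancellations into the \emph{smallest} color class and then the averaging argument that forces the bound down to $\tfrac16$. A greedy cycle-by-cycle choice derandomizes this step and yields the same estimate if an explicit construction is preferred.
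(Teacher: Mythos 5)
Your proposal is correct, and it reaches the sharp constant $\tfrac16$ by a genuinely different route from the paper. The paper invokes the theorem of DeVos, Ne\v{s}et\v{r}il and Raspaud that $G$ has a $4$-NZF if and only if there is a $\Z$-flow-continuous map $f\colon E(\vec{G})\to E(\vec{K_4})$, shows $f$ must be surjective when $\phi(G)=4$ (else $G\succ_\Z K_4-e$ would force a $3$-NZF on $G$), and then applies the pigeonhole principle to the six fibres of $f_*$: the smallest fibre $R$ has $|R|\le\tfrac16|E(G)|$ and $G-R\succ_\Z K_4-e_t$, which has a $3$-NZF. You instead work entirely with the classical even-subgraph description of $\Z_2\times\Z_2$-flows: the three colour classes, two covering even subgraphs meeting exactly in the smallest class $C_a$ with $|C_a|\le\tfrac13|E(G)|$, superposition of two $\{\pm1\}$-valued $\Z_3$-flows, and a uniform random choice of cycle signs so that each edge of $C_a$ cancels with probability exactly $\tfrac12$ (the two sign families are disjoint, so independence holds as you claim), giving a sign assignment with at most $\tfrac12|C_a|\le\tfrac16|E(G)|$ zeros. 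All steps check out, including the two invocations of Tutte's group-independence theorem. It is worth noting that your argument is precisely the generalization, to arbitrary graphs, of the elementary argument the paper gives in its introduction for the cubic case (where the colour classes are the three classes of a proper $3$-edge-colouring and the averaging is over the two orientations of each cycle of $c^{-1}(1)\cup c^{-1}(3)$); the paper chose not to pursue this for general graphs and routed the general case through flow-continuous maps instead. What each approach buys: the paper's proof is very short once Theorem \ref{teo:4nf_iff_Zfc_to_K4} is available and localizes $R$ as a fibre of $f_*$; yours is self-contained and elementary, produces a single $\Z_3$-flow on all of $G$ whose zero set is $R$ (so the ``large support'' corollary is immediate without reconstructing a flow from $G-R$), and shows in addition that $R$ can be taken inside the smallest colour class of the $\Z_2\times\Z_2$-flow. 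The only cosmetic caveat is that the final remark about greedy derandomization is an unneeded aside, since the averaging argument already proves existence.
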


By Theorems \ref{teo:main} and \ref{teo:charact_cubic_wrt_flows} we deduce the following corollary.

\begin{cor}\label{cor:main_for_cubic}

    Every $3$-edge-colorable cubic graph $G$ contains a subgraph $H$ whose connected components are either cycles or subdivisions of bipartite cubic graphs, such that $|E(H)|\ge \frac{5}{6}|E(G)|$.
\end{cor}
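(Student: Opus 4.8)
The plan is to read the corollary off from Theorem~\ref{teo:main} together with the two characterizations in Theorem~\ref{teo:charact_cubic_wrt_flows}, the only genuine content being a structural analysis of the graph that remains after deleting a small $3$-removable set.

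First I would produce the candidate subgraph. Since $G$ is a $3$-edge-colorable cubic graph, the first item of Theorem~\ref{teo:charact_cubic_wrt_flows} gives that $G$ has a $4$-NZF, so Theorem~\ref{teo:main} supplies a $3$-removable set $R\subseteq E(G)$ with $|R|\le\frac{1}{6}|E(G)|$. Let $H$ be the graph obtained from $G-R$ by discarding isolated vertices; since those vertices are incident to no edges, $|E(H)|=|E(G)|-|R|\ge\frac{5}{6}|E(G)|$, and $H$ inherits a $3$-NZF $\psi$ from $G-R$. It then remains to show that every connected component of $H$ is a cycle or a subdivision of a bipartite cubic graph.

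Next I would use $\psi$ to pin down the degrees in $H$. Being a subgraph of the cubic graph $G$, the graph $H$ has maximum degree at most $3$. As $\psi$ is nowhere-zero, a vertex of degree $1$ is impossible, because flow conservation would force the value on its unique edge to be $0$; and, as usual, a graph carrying a nowhere-zero flow has no bridge. Hence $H$ is bridgeless and every vertex of $H$ has degree $2$ or $3$. A component $C$ all of whose vertices have degree $2$ is a cycle. If instead $C$ contains a vertex of degree $3$, I would suppress all degree-$2$ vertices of $C$, replacing each maximal path through degree-$2$ vertices by a single edge; this produces a cubic graph $C'$ of which $C$ is a subdivision. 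Because the flow is constant along a subdivided edge, the restriction of $\psi$ to $C$ descends to a $3$-NZF on $C'$; in particular $C'$ is bridgeless, which forbids loops, since a loop at a vertex $v$ would leave a single further edge at $v$, necessarily a bridge. By the second item of Theorem~\ref{teo:charact_cubic_wrt_flows}, the cubic graph $C'$ is bipartite, so $C$ is a subdivision of a bipartite cubic graph. Combining the two cases yields the claimed description of $H$.

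I expect the only delicate point to be this last step, where the hypothesis ``$3$-NZF'' is converted into the geometric conclusion ``bipartite.'' The argument hinges on two facts that must be verified rather than assumed: that deleting the small removable set forces all (non-isolated) degrees into $\{2,3\}$, and that suppressing degree-$2$ vertices preserves the existence of a $3$-NZF, so that Tutte's cubic characterization applies to the suppressed graph $C'$. Everything else---the edge count and the initial invocation of the $4$-NZF---is immediate once $H$ has been defined.
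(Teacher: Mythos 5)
Your argument is correct and follows exactly the route the paper intends: the paper derives this corollary directly from Theorems~\ref{teo:main} and \ref{teo:charact_cubic_wrt_flows} without spelling out the details, and your write-up supplies precisely the missing routine steps (no degree-$1$ vertices or bridges in a graph with a nowhere-zero flow, suppression of degree-$2$ vertices preserving the $3$-NZF, and Tutte's bipartite characterization applied to the suppressed cubic components). No gaps.
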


From the proof of Theorem \ref{teo:main} (see Section \ref{sec:proof}), it follows that the subgraph $H$ in Corollary \ref{cor:main_for_cubic} can be chosen to be spanning.

Let $G$ be a bipartite cubic graph with $V(G)=S\cup T$, and let $G'$ be the cubic graph obtained by truncating every vertex of $T$ (i.e.\ replacing each of its vertices with a triangle). In \cite{3-flows_large_supp} it is shown that every $3$-removable set of $G'$ has cardinality at least $\frac{1}{6}|E(G')|$. Since $G$ is $3$-edge-colorable, then $G'$ is also a $3$-edge-colorable cubic graph. Therefore, there are infinitely many cubic graphs for which the the statements of Theorem \ref{teo:main} and Corollary \ref{cor:main_for_cubic} are best possible.

We conclude this section noticing that Theorem \ref{teo:main} follows rather easily for cubic graphs. Indeed, if $G$ is a cubic graph with a $4$-NZF, then by Theorem \ref{teo:charact_cubic_wrt_flows}, $G$ has a proper $3$-edge-coloring $c\colon E(G)\to \{1,2,3\}$. Then, $c^{-1}(1)\cup c^{-1}(2)$ induces a disjoint union of cycles $C_1,\dots,C_n$ in $G$. Let $D$ be an orientation on $c^{-1}(1)\cup c^{-1}(2)$ such that each cycle $C_i$ is a {\it directed circuit} (i.e.\ each of its vertices has one incoming and one outgoing arc). Note that $c^{-1}(1)\cup c^{-1}(3)$ also induces a disjoint union of cycles $C'_1,\dots,C'_t$ in $G$. For any orientation $D'$ of $c^{-1}(1)\cup c^{-1}(3)$, $c^{-1}(1)$ is partitioned into the set $X =\{z\in c^{-1}(1) \colon$ the orientations of $z$ in $D$ and $D'$ are opposite$\}$, and the set $Y = c^{-1}(1)\setminus X$ consisting of the arcs (with color $1$) having the same orientation in both $D$ and $D'$. Choose an orientation $D'$ such that each cycle $C'_j$ is a directed circuit and $|X|\le\frac{1}{2}|c^{-1}(1)|.$ Now, we construct a $3$-flow $\psi$ on $G$ as follows: on the edges of $c^{-1}(1)\cup c^{-1}(2)$ fix the same orientation as in $D$, and on the edges of $c^{-1}(3)$ fix the same orientation as in $D'$; let $\psi(z) = 1$ for all $z\in c^{-1}(2)\cup c^{-1}(3)$, $\psi(z) = 2$ for all $z\in Y$ and $\psi(z)=0$ otherwise. Then, $\psi$ is a $3$-NZF on $G-X$, and thus $X$ is a $3$-removable set such that $|X| \le\frac{1}{2}|c^{-1}(1)|= \frac{1}{6}|E(G)|$.

The proof of the general statement of Theorem \ref{teo:main} will be carried out in Section \ref{sec:proof} using flow continuous maps, which will be introduced in Section \ref{sec:flow_cont_maps}.

\subsection{Consequences}

We present here two consequences of our main result with respect to open problems in the field.

\subsubsection*{$3$-flows with large support}

For a flow $\psi$ on a graph $G$, the \emph{support} supp$\psi$ of $\psi$ is the set of arcs of $G$ having non-zero flow value. In \cite{3-flows_large_supp} it is proved that every $3$-edge-connected graph $G$ has a $3$-flow $\psi$ with $|$supp$\psi|\ge \frac{5}{6}|E(G)|$. The authors also point out that, while the statement seems to hold for $2$-edge-connected graphs, their theorem does not obviously give the same result for $2$-edge-connected graphs. They propose the following conjecture (see Conjecture 8.1 in \cite{3-flows_large_supp}).

\begin{conj}[\cite{3-flows_large_supp}]\label{conj:3flowslargesupp}
    Every $2$-edge-connected graph $G$ has a $3$-flow $\psi$ such that $|$\emph{supp}$\psi|\ge \frac{5}{6}|E(G)|$.
\end{conj}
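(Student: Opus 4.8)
The plan is to work with the conjecture in its equivalent removable-set form. A $3$-flow $\psi$ with $|\mathrm{supp}\,\psi|\ge\frac{5}{6}|E(G)|$ is exactly a nowhere-zero $3$-flow on $G-R$ with $R=E(G)\setminus\mathrm{supp}\,\psi$ and $|R|\le\frac{1}{6}|E(G)|$; hence Conjecture~\ref{conj:3flowslargesupp} asserts that every $2$-edge-connected graph has a $3$-removable set of size at most $\frac{1}{6}|E(G)|$. Two regimes are already settled: the $3$-edge-connected case is a theorem of \cite{3-flows_large_supp}, and the case of graphs admitting a $4$-NZF is Theorem~\ref{teo:main}. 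I would therefore isolate the genuinely new regime, namely $2$-edge-connected graphs that possess a nontrivial $2$-edge-cut and have flow number at least $5$ (otherwise a $4$-NZF exists and Theorem~\ref{teo:main} applies), and attack it by two complementary ideas: an even-subgraph construction to replace the missing $4$-NZF, and a structural induction that peels off $2$-edge-cuts down to $3$-edge-connected blocks.

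For a block with no $4$-NZF I would exploit covers by even subgraphs, since the clean $3$-edge-coloring argument of the Introduction is unavailable. By Seymour's $6$-flow theorem the block carries a nowhere-zero $\Z_2\times\Z_3$-flow $(\alpha,\beta)$. The $\Z_3$-component $\beta$ restricts to a nowhere-zero $\Z_3$-flow on $G-Z$, where $Z=\{e:\beta(e)=0\}$, and Tutte's equivalence of $\Z_3$-flows with integer $3$-flows shows that $Z$ is $3$-removable. Nowhere-zero-ness of $(\alpha,\beta)$ forces $Z\subseteq\mathrm{supp}\,\alpha$, which is an even subgraph, so the entire task reduces to choosing the decomposition with $|Z|\le\frac{1}{6}|E|$, i.e.\ confining the forced zeros of $\beta$ to a small even subgraph. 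Here Jaeger's $8$-flow theorem, i.e.\ a cover of $E$ by three even subgraphs, gives additional room to reroute $\beta$ and shrink $Z$ by absorbing part of it into the cover; the goal is to re-run the counting behind Theorem~\ref{teo:main} with the $6$-flow target playing the role that $K_4$ plays there.

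To reach $3$-edge-connected (or $4$-NZF) blocks I would set up an induction on $|E(G)|$ that removes $2$-edge-cuts. Given a nontrivial $2$-edge-cut $\{e,f\}$ splitting $G$ into sides $A$ and $B$, form the smaller $2$-edge-connected graphs $G_A,G_B$ by replacing the opposite side with a single virtual edge. Since every flow assigns $e$ and $f$ equal values, a pair of near-maximum-support $3$-flows on $G_A$ and $G_B$ that agree on their virtual edges glue to a $3$-flow on $G$. The flow-continuous map machinery of Section~\ref{sec:flow_cont_maps} is the natural device for transporting the inductive support bounds across such a cut and for exhibiting $G$ as assembled from $3$-edge-connected blocks; optionally, a preliminary reduction to cubic bridgeless graphs (suppressing degree-$2$ vertices and splitting high-degree vertices into cubic gadgets, after checking the relevant ratios are preserved) would let the elementary coloring argument of the Introduction dispatch all $3$-edge-colorable blocks.

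The hard part will be that the bound admits no slack: the truncated bipartite cubic graphs of \cite{3-flows_large_supp} force the constant $\frac{1}{6}$ to be attained, so the bookkeeping must be exact. Across a $2$-edge-cut a maximum-support $3$-flow on a block may be forced to vanish on its virtual edge, costing the two cut edges $e,f$, and bounding the accumulation of such losses over all $2$-edge-cuts within the global $\frac{1}{6}$ budget is the delicate point of the induction. Equally, replacing the $K_4$ target of Theorem~\ref{teo:main} by the larger $6$-flow target degrades the removable fraction, so a snark-specific refinement---controlling the odd part of a $2$-factor, where the $\Z_3$-flow $\beta$ is driven to zero---appears necessary to recover the exact $\frac{1}{6}$. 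I expect this combination, the tight control at $2$-edge-cuts together with the odd part of $4$-NZF-free blocks, to be the crux, whereas the $3$-edge-connected and $4$-NZF regimes already fall to \cite{3-flows_large_supp} and Theorem~\ref{teo:main}.
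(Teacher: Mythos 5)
You are attempting to prove a statement that the paper itself does not prove: Conjecture~\ref{conj:3flowslargesupp} is stated as an open conjecture of DeVos, McDonald, Pivotto, Rollov\'a and \v S\'amal, and the paper's contribution (Theorem~\ref{teo:main}) resolves only the special case of graphs admitting a $4$-NZF. Your opening reduction is sound --- the equivalence between a $3$-flow of support at least $\frac{5}{6}|E(G)|$ and a $3$-removable set of size at most $\frac{1}{6}|E(G)|$ is correct, and you rightly identify that the $3$-edge-connected case and the $4$-NZF case are settled --- but what remains is a research program, not a proof, and its two pillars each have a concrete gap. First, for blocks with no $4$-NZF, your plan to ``re-run the counting behind Theorem~\ref{teo:main} with the $6$-flow target playing the role of $K_4$'' does not go through: the pigeonhole in Section~\ref{sec:proof} depends on two special properties of $K_4$, namely that it has exactly six edges \emph{and} that $K_4-e$ has a $3$-NZF for every edge $e$ (being homeomorphic to $K_2^3$). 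No analogous small target graph is known for the class of graphs with flow number $5$ or more; a Seymour-type $\Z_2\times\Z_3$ decomposition only confines the zero set $Z=\beta^{-1}(0)$ to an even subgraph, which can occupy far more than $\frac{1}{6}$ of the edges, and counting arguments of exactly this flavor are what produce Tarsi's $\frac{4}{5}$ bound cited in the paper --- strictly weaker than $\frac{5}{6}$.

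Second, your induction over $2$-edge-cuts founders on the gluing step you yourself flag: a maximum-support $3$-flow on a side $G_A$ may be forced to vanish on its virtual edge (for integer $3$-flows you can negate to match two nonzero values, but you cannot repair a zero against a nonzero), and you give no mechanism for bounding the accumulated cost of such mismatches within the exact $\frac{1}{6}$ budget. Since the extremal examples (the truncated bipartite cubic graphs of \cite{3-flows_large_supp}) show the constant is tight, there is no slack to absorb even one uncontrolled loss per cut. This is precisely the obstruction the authors of \cite{3-flows_large_supp} point out when they note their $3$-edge-connected theorem ``does not obviously give the same result for $2$-edge-connected graphs,'' and it is why the statement remains a conjecture. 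In short: your proposal correctly maps the terrain and honestly locates the crux, but both of its novel steps --- the counting for $4$-NZF-free blocks and the tight bookkeeping across $2$-edge-cuts --- are left unproved, so no part of the open case is actually closed.
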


The best known result, attributed to Tarsi (see Theorem 1.4 in \cite{3-flows_large_supp}), states that every $2$-edge-connected graph $G$ has a $3$-flow $\psi$ such that $|$supp$\psi| \ge\frac{4}{5}|E(G)|.$ 
From Theorem \ref{teo:main}, we get the following corollary, which
gives a positive answer to Conjecture \ref{conj:3flowslargesupp} for ($2$-edge-connected) graphs admitting a $4$-NZF.

\begin{cor}
    Every graph $G$ admitting a $4$-NZF has a $3$-flow $\psi$ such that $|$\emph{supp}$\psi|\ge \frac{5}{6}|E(G)|$.
\end{cor}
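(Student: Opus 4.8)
The plan is to obtain this statement as an immediate consequence of Theorem~\ref{teo:main}. Since $G$ admits a $4$-NZF, Theorem~\ref{teo:main} supplies a $3$-removable set $R\subseteq E(G)$ with $|R|\le\frac{1}{6}|E(G)|$; by the definition of $3$-removability this means that $G-R$ carries a nowhere-zero $3$-flow $\psi'$. The idea is then simply to view $\psi'$ as a $3$-flow on the whole of $G$ and to bound its support from below by $|E(G)|-|R|$.

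The key step is to extend $\psi'$ to a function $\psi$ on $E(G)$ by keeping the orientation and the values of $\psi'$ on the edges of $G-R$ and setting $\psi(z)=0$ for every $z\in R$. I would first check that $\psi$ is a genuine $\Z$-flow on $G$. Because deleting edges does not delete vertices, we have $V(G-R)=V(G)$, so the conservation condition for $\psi$ at a vertex $v$ differs from that for $\psi'$ at $v$ only by the contributions of the arcs in $R$ incident with $v$; since those arcs all carry flow value $0$, they contribute nothing to either the incoming or the outgoing sum, and conservation for $\psi$ reduces to conservation for $\psi'$, which holds by hypothesis. Moreover $\psi'$ takes values in $\{-2,-1,1,2\}$, so $\psi$ takes values in $\{-2,-1,0,1,2\}$ and hence $|\psi(z)|\le2$ for every $z$; thus $\psi$ is a $3$-flow on $G$.

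Finally I would compute the support. Since $\psi'$ is nowhere-zero on $G-R$ and $\psi$ vanishes exactly on $R$, we have $\mathrm{supp}\,\psi=E(G)\setminus R$, whence
\[
|\mathrm{supp}\,\psi|=|E(G)|-|R|\ge|E(G)|-\frac{1}{6}|E(G)|=\frac{5}{6}|E(G)|,
\]
as required. There is essentially no obstacle here: the only point needing a moment's care is the verification that extending a flow by zero across the removed edges preserves Kirchhoff's law, which is immediate, together with the observation that the support of the extended flow is exactly the complement of the removable set.
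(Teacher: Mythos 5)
Your proof is correct and follows exactly the route the paper intends: the corollary is stated as an immediate consequence of Theorem~\ref{teo:main}, obtained by extending the nowhere-zero $3$-flow on $G-R$ by zero on the removable set $R$, so that the support is $E(G)\setminus R$ and has size at least $\frac{5}{6}|E(G)|$. The paper omits the (routine) verification that the zero-extension satisfies conservation, which you supply correctly.
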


\subsubsection*{Removable edges in $3$-edge-colorable cubic graphs}

A conjecture recently proposed by Hoffmann-Ostenhof \cite{HO_3ec_conj} suggests that a $3$-edge-colorable cubic graph has an edge whose deletion leads to a graph homeomorphic to a $3$-edge-colorable cubic graph.
This conjecture can be rephrased in terms of nowhere-zero flows as follows.

\begin{conj}[\cite{HO_3ec_conj}]\label{conj:flow_version}
	A cubic graph admitting a $4$-NZF has a $4$-removable edge.
\end{conj}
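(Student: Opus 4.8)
The plan is to pass to group-valued flows, where the operation of \textquotedblleft deleting an edge and repairing conservation\textquotedblright\ becomes transparent. Since $G$ is cubic and admits a $4$-NZF, Theorem~\ref{teo:charact_cubic_wrt_flows} gives a proper $3$-edge-colouring, which is the same datum as a nowhere-zero $\Z_2\times\Z_2$-flow $\psi$: assign the three nonzero elements of $\Z_2\times\Z_2$ to the three colour classes $M_1,M_2,M_3$ (consistently, since $(1,0)+(0,1)+(1,1)=(0,0)$). Recall that $G-e$ has a $4$-NZF if and only if it has a nowhere-zero $\Z_2\times\Z_2$-flow, and the advantage of this group is that orientations are irrelevant and every element is its own inverse. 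The key device I would first establish is a \textbf{path-modification lemma}: if $e=uv$ has $\psi(e)=\alpha$ and $G-e$ contains a $u$--$v$ path $P$ none of whose edges is coloured $\alpha$, set $\psi'(f)=\psi(f)+\alpha$ for $f\in E(P)$ and $\psi'(f)=\psi(f)$ otherwise. Conservation is preserved at each internal vertex of $P$ (two incident edges change by $\alpha$, and $2\alpha=0$) and is restored at $u$ and $v$ (the lost contribution $\alpha$ of $e$ is compensated); and $\psi'$ is nowhere-zero, since an edge of $P$ could vanish only if its colour were $-\alpha=\alpha$, which $P$ avoids. Hence $\psi'$ is a nowhere-zero $\Z_2\times\Z_2$-flow on $G-e$, so $e$ is $4$-removable.

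Next I would turn the hypothesis of the lemma into a statement about $2$-factors. The edges coloured differently from $\alpha$ are exactly $M_\beta\cup M_\gamma$, a spanning $2$-regular subgraph, i.e.\ a disjoint union of cycles; so $u$ and $v$ are joined by an $\alpha$-avoiding path precisely when they lie on a common cycle of $M_\beta\cup M_\gamma$. The lemma therefore reduces Conjecture~\ref{conj:flow_version} to the following: \emph{for some proper $3$-edge-colouring and some colour $\alpha$, there is an edge of $M_\alpha$ both of whose endpoints lie on one cycle of the complementary $2$-factor $M_\beta\cup M_\gamma$} (a \textquotedblleft chord\textquotedblright\ of that $2$-factor). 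This condition is met outright whenever some proper colouring has a $2$-factor consisting of a \emph{single} cycle, for then every edge of the complementary matching is a chord. In particular this settles the conjecture for every Hamiltonian cubic graph: a Hamiltonian cycle has even length (cubic graphs have an even number of vertices), hence splits into two matchings $M_\beta\sqcup M_\gamma$, while its complement is a perfect matching $M_\alpha$; any $M_\alpha$-edge is then a chord, and the path along the cycle makes it $4$-removable. This same Hamiltonian-colouring viewpoint explains the bipartite case and, for instance, recovers that $e$ is removable even in rigid examples such as the cube $Q_3$, where the coordinate colouring fails the chord test on all three colours but a Hamiltonian colouring succeeds.

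I expect the genuine difficulty to be exactly the chord reduction for graphs that are far from Hamiltonian. One must rule out the possibility that there is a $3$-edge-colourable cubic graph for which, \emph{simultaneously for every proper $3$-edge-colouring and every colour}, every edge of every colour class runs between two \emph{distinct} cycles of the complementary $2$-factor. The natural weapon against such a configuration is to vary the colouring by Kempe interchanges: swapping two colours, say $\alpha\leftrightarrow\beta$, along a cycle of $M_\alpha\cup M_\beta$ alters which edges belong to $M_\alpha$ and hence can merge two cycles of the $M_\beta\cup M_\gamma$ $2$-factor into one, turning a between-cycles edge into a chord. The plan would be to show that, starting from any proper colouring, some finite sequence of such interchanges (augmented, if needed, by the extra freedom of modifying $\psi$ along several paths rather than one) always produces a chord. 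Establishing that no global obstruction blocks every such sequence is precisely the hard core of the problem, and is the reason the statement is presently a conjecture rather than a theorem; the approximation provided by Theorem~\ref{teo:main} can be read as controlling how large a removable set one is forced to take when a single chord is not readily available.
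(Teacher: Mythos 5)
This statement is an open conjecture (due to Hoffmann-Ostenhof), and the paper does not prove it: it only records the bipartite case and derives from Theorem~\ref{teo:main} an approximation (a non-empty $4$-removable \emph{set} of at most $\frac{1}{6}|E(G)|$ edges). Your proposal, by your own admission in the final paragraph, also does not prove it, so there is a genuine gap --- and it sits exactly where you locate it. Your preliminary steps are correct: the path-modification lemma is the standard $\Z_2\times\Z_2$-flow argument (conservation at internal vertices uses $2\alpha=0$; non-vanishing on $P$ uses $\beta+\alpha=\gamma\ne0$ and $\gamma+\alpha=\beta\ne0$), and since a graph has a $4$-NZF iff it has a nowhere-zero $\Z_2\times\Z_2$-flow, the lemma does certify $4$-removability; the reduction to a ``chord'' of the complementary $2$-factor is exact for single-path modifications, and the Hamiltonian corollary is sound. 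But the core step --- that some sequence of Kempe interchanges (or multi-path modifications) always produces a chord --- is asserted as a plan with no argument: an interchange along an $M_\alpha\cup M_\beta$ cycle can merge two cycles of $M_\beta\cup M_\gamma$ but can simultaneously split others, and you offer no invariant, potential function, or terminating procedure ruling out a global obstruction. That missing step \emph{is} the conjecture (up to the fact that your chord condition is only sufficient, not necessary, since one may repair conservation along more general edge sets than a single path), so nothing beyond known partial cases is established.

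One further inaccuracy: the bipartite case is not ``explained by the Hamiltonian-colouring viewpoint'' --- bipartite cubic graphs need not be Hamiltonian. The paper's argument is different and unconditional: a bipartite cubic graph has $\phi(G)=3$ by Theorem~\ref{teo:charact_cubic_wrt_flows}, and for any edge $e$ not contained in a $2$-edge-cut one has $\phi(G-e)\le\phi(G)+1\le4$, via rerouting one unit of flow along a directed path in $\vec{G}-e$. If you want your write-up to cover the bipartite case, you should either use this flow-number argument or restrict your claim to Hamiltonian graphs. As it stands, your contribution is a correct reformulation of the conjecture (find a colour class edge that is a chord of the complementary $2$-factor, for some proper $3$-edge-colouring) together with the Hamiltonian special case, which is weaker than what the paper itself provides, namely the quantitative approximation of Theorem~\ref{teo:main}.
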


It is well known that if an edge $e=uv$ of a graph $G$ is not in a $2$-edge-cut, then $\phi(G-e)\le \phi(G)+1.$ To see this let $\psi$ be a positive $\phi(G)$-NZF on a suitable orientation $\vec{G}$ of $G$ and assume without loss of generality that $e$ is directed from $u$ to $v$ in $\vec{G}$. Since $\psi$ is nowhere-zero, there is a directed path $P$ from $u$ to $v$ in $\vec{G}-e$. Adding the flow value $1$ along $P$ gives a $k$-NZF on $\vec{G}-e$, with $k\le\phi(G)+1.$

It follows that bipartite cubic graphs verify Conjecture \ref{conj:flow_version} and therefore, by Theorem \ref{teo:charact_cubic_wrt_flows}, we only need to study cubic graphs with flow number $4.$


Note that, for all $k\ge2$, a $k$-removable set is also $(k+1)$-removable. Thus, from Theorem~\ref{teo:main} and the above discussion we deduce the following corollary. It gives an approximation to Conjecture \ref{conj:flow_version} for non-bipartite $3$-edge-colorable cubic graphs.

\begin{cor}
    Every bipartite cubic graph has a $4$-removable edge. Moreover, every cubic graph $G$ admitting a $4$-NZF 
    has a non-empty $4$-removable set of cardinality at most $\frac{1}{6}|E(G)|$.
\end{cor}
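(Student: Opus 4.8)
The plan is to derive both assertions from Theorem~\ref{teo:main}, Tutte's Theorem~\ref{teo:charact_cubic_wrt_flows}, the monotonicity observation that a $3$-removable set is $4$-removable, and the $2$-edge-cut fact recalled above.

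For the first assertion, let $G$ be a bipartite cubic graph. By Theorem~\ref{teo:charact_cubic_wrt_flows} it has a $3$-NZF, so $\phi(G)\le 3$; since a bridge cannot carry a nonzero flow, $G$ is bridgeless and hence each of its components is $2$-edge-connected. The crux is to exhibit an edge $e$ that lies in no $2$-edge-cut: once we have it, the recalled fact gives $\phi(G-e)\le\phi(G)+1\le 4$, so $G-e$ has a $4$-NZF and $e$ is $4$-removable. I therefore reduce the first assertion to the statement that \emph{every $2$-edge-connected cubic graph has an edge contained in no $2$-edge-cut}, which I expect to be the main obstacle.

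To prove this auxiliary statement I would argue by minimality on $2$-edge-cuts. If $G$ is $3$-edge-connected there is nothing to do, so assume $G$ has a $2$-edge-cut and choose one, say $\{e_1,e_2\}$, whose smaller side $A$ has the fewest vertices. Letting $a_1,a_2\in A$ be the endpoints of the cut edges inside $A$, one first checks $a_1\ne a_2$ (otherwise the third edge at that vertex would be a bridge), so that adding a virtual edge $a_1a_2$ to $G[A]$ produces a cubic graph $H$. The key step is to show that $H$ is $3$-edge-connected: any $2$-edge-cut of $H$, according to whether it uses the virtual edge or not, would yield a $2$-edge-cut of $G$ with a strictly smaller side, contradicting the minimality of $A$. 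Consequently no real edge $f$ of $A$ can lie in a $2$-edge-cut $\{f,g\}$ of $G$, as a short case analysis on the position of $g$ (inside $A$, equal to $e_1$ or $e_2$, or inside the opposite side) turns such a cut into a $2$-edge-cut of $H$ in each case; this contradiction shows that $f$ is the desired edge.

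For the second assertion, let $G$ be a cubic graph with a $4$-NZF and write $m=|E(G)|$. Theorem~\ref{teo:main} provides a $3$-removable set $R$ with $|R|\le\frac16 m$, which is $4$-removable by the monotonicity observation. If $G$ is not bipartite then $G$ has no $3$-NZF by Theorem~\ref{teo:charact_cubic_wrt_flows}, whence $R\ne\varnothing$ and we are done. If $G$ is bipartite, the first assertion supplies a single $4$-removable edge, a non-empty $4$-removable set of cardinality $1\le\frac16 m$, since a bipartite cubic graph has at least $9$ edges. This settles the corollary.
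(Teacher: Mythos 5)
Your proposal is correct and follows the same route as the paper: the first assertion comes from the fact that deleting an edge lying in no $2$-edge-cut raises the flow number by at most one, and the second from Theorem \ref{teo:main} together with the observation that a $3$-removable set is $4$-removable, with the bipartite case of non-emptiness handled by falling back on the first assertion. Where you go beyond the paper is genuinely useful: the paper merely asserts that bipartite cubic graphs ``verify'' Conjecture \ref{conj:flow_version} after recalling the $2$-edge-cut fact, leaving implicit the claim that every $2$-edge-connected cubic graph has an edge contained in no $2$-edge-cut. Your proof of that claim (take a $2$-edge-cut whose side $A$ has fewest vertices, close $A$ up with a virtual edge $a_1a_2$ into a cubic graph $H$, show $H$ is $3$-edge-connected by minimality, and then show a $2$-edge-cut of $G$ through a real edge of $A$ would induce an edge cut of size at most two in $H$) is sound; the only case worth writing out carefully is when the second cut edge $g$ lies on the opposite side of the original cut, where one should consider the four ``quadrants'' determined by the two cuts and observe that one of the two quadrants inside $A$ has at most two edges of $H$ leaving it. One caveat, which is really a caveat about the statement itself rather than your argument: your remark that a bipartite cubic graph has at least $9$ edges holds for simple graphs but fails for the multigraph $K_2^3$, which has three edges and admits no non-empty set of size at most $\frac{1}{6}|E|$; under the paper's implicit convention (or restricting to cubic graphs with at least six edges) your proof is complete.
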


\section{Flow continuous maps}\label{sec:flow_cont_maps}

In the proof of Theorem \ref{teo:main} we make use of $\Z$-flow-continuous maps.
Flow continuous maps were introduced in \cite{DvNR} as generalization of cycle continuous maps 
in the context of the famous Petersen coloring Conjecture \cite{Jaeg_NZF_problems}.

Let $A$ be an abelian group and fix on $G_1$ and $G_2$ an orientation $\vec{G_1}$ and $\vec{G_2}$, respectively. A map $f\colon E(\vec{G_1})\to E(\vec{G_2})$ is called $A$-\emph{flow-continuous} if, for every $A$-flow $\psi$ on $\vec{G_2}$, $\psi\circ f$ is an $A$-flow on $\vec{G_1}.$ If such a map exists we write $G_1 \succ_A G_2.$ See \cite{Matt, nes_sam} for further reading on flow-continuous maps.

Given an orientation $\vec{G}$ of a graph $G$ and an edge $e\in E(G)$, we denote by $z_e$ the arc of $\vec{G}$ corresponding to an orientation of $e$.
An $A$-flow-continuous map $f\colon E(\vec{G_1})\to E(\vec{G_2})$ induces a map $f_*\colon E(G_1)\to E(G_2)$ by the following rule: $f_*(e_1) = e_2$ if and only if $f(z_{e_1})=z_{e_2}$. 
Note that, for $X\subseteq E(\vec{G_2})$, $\vec{G_2}'=\vec{G_2}-X$ and $\vec{G_1}'=\vec{G_1}-f^{-1}(X)$, then $f\colon E(\vec{G_1}')\to E(\vec{G_2}')$ is still $A$-flow-continuous. Therefore,
the following lemma holds true.

\begin{lem}\label{lem:removal_edges}
    Let $G_1$ and $G_2$ be graphs and $A$ an abelian group. If there is an $A$-flow-continuous map $f\colon E(\vec{G_1})\to E(\vec{G_2})$, then for all $Y\subseteq E(G_2)$, $G_1-f_*^{-1}(Y) \succ_A G_2-Y.$
\end{lem}

The following theorem characterizes the graphs having a $4$-NZF. We denote by $K_4$ the complete graph on four vertices.

\begin{teo}[\cite{DvNR}]\label{teo:4nf_iff_Zfc_to_K4}
	A graph $G$ has $4$-NZF if and only if $G\succ_{\Z} K_4.$
\end{teo}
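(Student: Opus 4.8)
The plan is to prove the two implications of Theorem~\ref{teo:4nf_iff_Zfc_to_K4} separately. The implication ``$G\succ_{\Z}K_4\Rightarrow G$ has a $4$-NZF'' is short, while producing the flow-continuous map in the converse is the heart of the matter.

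For the easy direction, note that $K_4$ is cubic and $3$-edge-colorable, so by Theorem~\ref{teo:charact_cubic_wrt_flows} it carries a $4$-NZF $\psi$; concretely one may orient each of the three perfect matchings of $K_4$ as a directed subgraph and obtain a $\Z$-flow with values in $\{\pm1,\pm2,\pm3\}$. Fix a $\Z$-flow-continuous map $f\colon E(\vec{G})\to E(\vec{K_4})$. By definition $\psi\circ f$ is a $\Z$-flow on $\vec{G}$, and for every arc $z$ of $\vec{G}$ we have $(\psi\circ f)(z)=\psi(f(z))$, which is one of the values of $\psi$, hence nonzero and of absolute value at most $3$. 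Thus $\psi\circ f$ is a nowhere-zero $4$-flow on $G$.

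For the converse I would first pass to groups: by the classical theorem of Tutte that nowhere-zero $k$-flows and nowhere-zero $\Z_k$-flows coexist, $G$ has a nowhere-zero $\Z_2\times\Z_2$-flow $\phi\colon E(\vec{G})\to(\Z_2\times\Z_2)\setminus\{0\}$. I would identify $V(K_4)$ with $\Z_2\times\Z_2$, turning $K_4$ into the Cayley graph in which the edge joining $u$ and $v$ carries the ``color'' $u-v$, so that each of the three nonzero group elements labels one perfect matching. The goal is then to upgrade $\phi$ into a map $f$ sending each edge $e$ to one of the two edges of $K_4$ of color $\phi(e)$, with a suitable direction. To check flow-continuity I would use the following localization: since the flow space of $K_4$ is spanned by its four triangles, and since a function on $E(\vec{G})$ is a $\Z$-flow iff it is balanced at every vertex, $f$ is $\Z$-flow-continuous iff, for each vertex $x$ of $G$, the signed combination $\sum_{z\ni x}\delta(z)\,\mathbf{1}_{f(z)}$ lies in the cut space of $K_4$, where $\delta(z)=\pm1$ records whether $z$ leaves or enters $x$ and $\mathbf{1}_{f(z)}$ is the signed indicator of the image arc. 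Equivalently, for every vertex $x$ and every triangle $T$ of $K_4$, the signed number of arcs at $x$ whose image lies on $T$ must vanish. A useful structural fact here is that every triangle of $K_4$ uses each of the three colors exactly once (two distinct nonzero elements of $\Z_2\times\Z_2$ always sum to the third), so this vanishing condition couples the three color classes of $\phi$ at $x$ in precisely the way the flow condition $\sum_{z\ni x}\delta(z)\phi(z)=0$ does.

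The main obstacle is to make, consistently over the whole graph, the binary choice of which of the two same-colored edges of $K_4$ each edge of $G$ maps to (and with which orientation), so that all local balance conditions hold simultaneously. I would attack this by decomposing the $\Z_2\times\Z_2$-flow into two even subgraphs $C_1,C_2$ with $C_1\cup C_2=E(G)$, orienting each as a disjoint union of directed circuits, and letting these orientations drive the choice of image arc. The delicate point—and the crux of the whole argument—is reconciling the two subgraphs on their common edges (those of color $(1,1)$), where the two circuit systems may impose conflicting data, and verifying that the resulting assignment leaves no parity obstruction, i.e.\ that $\sum_{z\ni x}\delta(z)\,\mathbf{1}_{f(z)}$ is genuinely a coboundary of $K_4$ at every vertex. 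This is exactly the place where one must exploit that $\phi$ is a true flow rather than a mere coloring, together with the highly symmetric (in fact self-dual) structure of $K_4$.
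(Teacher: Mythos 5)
The paper does not prove this theorem; it imports it from \cite{DvNR}, so there is no internal proof to compare against and your attempt has to stand on its own. Your easy direction is correct and complete: a $4$-NZF on $K_4$ composed with a $\Z$-flow-continuous $f$ is a $\Z$-flow on $\vec{G}$ whose values are nonzero of absolute value at most $3$. Your localization of flow-continuity is also essentially right (with the caveat that over $\Z$ you need the \emph{tension lattice}, not just the rational cut space, to be the orthogonal complement of the flow lattice in $\Z^{E(K_4)}$ --- true here because incidence matrices are totally unimodular).

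The hard direction, however, is not a proof: you set up the framework and then explicitly defer ``the main obstacle'' and ``the crux of the whole argument'' --- the consistent choice, edge by edge, of which of the two same-colored arcs of $K_4$ to use --- without resolving it. This is not a routine verification; it is the entire content of the theorem, and the route you sketch runs into a concrete obstruction. If $z_a,z_b$ are arcs of colors $a,b$ leaving a common vertex $v$ of $K_4$ and $z_c$ is the color-$c$ arc entering $v$, then $\psi(z_c)=\psi(z_a)+\psi(z_b)$ for every flow $\psi$ (conservation at $v$), which is why the ``sum'' pattern is realizable; but no arc $z'$ of $K_4$ satisfies $\psi(z')=\psi(z_a)-\psi(z_b)$ for all flows $\psi$, because the only $\pm1$-valued integer tensions of $K_4$ with support of size three are the single-vertex cuts, and the required sign pattern $\chi_{z'}-\chi_{z_a}+\chi_{z_b}$ never matches one. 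Consequently, an edge of $G$ on which your two oriented circuit systems point in opposite directions has \emph{no} admissible image in $K_4$, and you give no argument that the circuit decompositions and orientations of the two even subgraphs can always be chosen to agree on their intersection (naive attempts, e.g.\ splitting a positive integer $4$-flow into its ``odd'' and ``$\ge 2$'' parts, do not produce two flows). Until that reconciliation step is actually carried out --- or replaced by a different construction of the three $K_4$-coordinate flows on $G$ --- the forward implication remains unproved.
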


\section{Proof of Theorem \ref{teo:main}}\label{sec:proof}

Let $G$ be a graph admitting a $4$-NZF. If $\phi(G)\le3$, then the empty set is a $3$-removable set for $G$.
Thus, let $G$ be a graph with $\phi(G)=4$. By Theorem \ref{teo:4nf_iff_Zfc_to_K4}, there is a $\Z$-flow-continuous map $f\colon E(\vec{G})\to E(\vec{K_4})$, where $\vec{G}$ and $\vec{K_4}$ are suitable orientations of $G$ and $K_4$ respectively.


\begin{claim}\label{prop:f_surjective}
    $f$ is surjective.
\end{claim}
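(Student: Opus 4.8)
The plan is to show that the $\Z$-flow-continuous map $f\colon E(\vec{G})\to E(\vec{K_4})$ must hit every arc of $\vec{K_4}$, arguing by contradiction. Suppose $f$ misses some edge $e_2\in E(K_4)$, i.e.\ $e_2\notin f_*(E(G))$. The key idea is that if $f$ avoids an edge of $K_4$, then $f$ really factors through the smaller graph $K_4-e_2$, and this should force $G$ to have a smaller flow number than $4$, contradicting the assumption $\phi(G)=4$.

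First I would make precise what $K_4-e_2$ looks like and what its flow properties are. Deleting an edge from $K_4$ leaves a graph on four vertices with five edges that is no longer $3$-edge-connected; concretely it is two triangles sharing an edge (equivalently, a $4$-cycle with one diagonal), and one checks directly that this graph admits a nowhere-zero $3$-flow. More usefully, I would view the situation through Lemma~\ref{lem:removal_edges}: taking $Y=\{e_2\}$ and noting $f_*^{-1}(Y)=\emptyset$ (since $e_2$ is not in the image of $f_*$), the lemma yields $G\succ_{\Z} K_4-e_2$.

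The core of the argument is then a flow-continuity transfer: if $H$ has a $k$-NZF and $G\succ_{\Z}H$, then $G$ has a $k$-NZF. Indeed, given a nowhere-zero $k$-flow $\psi$ on a suitable orientation of $H$, the composition $\psi\circ f$ is a $\Z$-flow on $\vec{G}$ by definition of flow-continuity, its values lie in the same bounded range $\{-(k-1),\dots,k-1\}$, and it is nowhere-zero precisely because $f$ maps every arc of $\vec{G}$ to an arc of $H$ on which $\psi$ is nonzero. Applying this with $H=K_4-e_2$ and $k=3$, I would conclude that $G$ has a $3$-NZF, so $\phi(G)\le 3$, contradicting $\phi(G)=4$. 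Hence $f$ is surjective.

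The main obstacle to watch is the bookkeeping around orientations and the induced map $f_*$: one must be careful that ``$f$ misses $e_2$'' means $f$ avoids \emph{both} orientations $z_{e_2}$ and $\overline{z_{e_2}}$ of $e_2$, so that restricting the codomain to $E(\vec{K_4}-e_2)$ genuinely keeps $f$ well-defined and flow-continuous. This is exactly the content packaged in Lemma~\ref{lem:removal_edges} (with $X$ the two arcs of $e_2$), so invoking that lemma cleanly sidesteps the orientation subtleties. The only remaining quantitative point is verifying $\phi(K_4-e_2)\le 3$, which is an elementary finite check on a five-edge graph.
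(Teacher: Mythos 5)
Your proposal is correct and follows essentially the same route as the paper: assume an edge of $K_4$ is missed, apply Lemma~\ref{lem:removal_edges} to get $G\succ_{\Z}K_4-e$, observe that $K_4-e$ has a $3$-NZF (the paper notes it is homeomorphic to $K_2^3$; your ``two triangles sharing an edge'' description is equivalent), and transfer the $3$-NZF back to $G$ to contradict $\phi(G)=4$. The only difference is that you spell out the transfer step ($G\succ_{\Z}H$ and $H$ has a $k$-NZF imply $G$ has a $k$-NZF), which the paper leaves implicit.
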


\begin{proof}
    By contradiction, assume that there is $z_e\in E(\vec{K}_4)$ such that $z_e\notin$ Im$(f)$. Then, by Lemma \ref{lem:removal_edges}, $G\succ_{\Z} K_4-e$. Note that $K_4-e$ is homeomorphic to $K_2^3$ (the multigraph on two vertices connected by three parallel edges), and thus has a $3$-NZF. Hence it follows that $G$ has a $3$-NZF, a contradiction.
\end{proof}

    By Claim \ref{prop:f_surjective}, we infer that $E(G) = \cup_{i=1}^6 f_*^{-1}(e_i)$ with $f_*^{-1}(e_i)\ne \emptyset$ for all $i\in\{1,\dots,6\}$, where $E(K_4) = \{e_1,\dots,e_6\}$. Thus, there is $t \in \{1,\dots, 6\}$ such that $1\le|f_*^{-1}(e_t)|\le \frac{1}{6}|E(G)|.$ Let $R=f_*^{-1}(e_t)$. Since $1\le|R|\le \frac{1}{6}|E(G)|$, then $H=G-R$ is a proper non-trivial subgraph of $G$. By Lemma \ref{lem:removal_edges}, $H\succ_{\Z} K_4-e_t$ and thus, since $K_4-e_t$ has a $3$-NZF
    , we infer that $H$ has a $3$-NZF. Hence, $R$ is a $3$-removable set such that $|R| \le \frac{1}{6}|E(G)|.$

\section{Acknowledgments}

The author would like to thank G.\ Mazzuoccolo and E.\ Steffen for their remarks which helped improving the presentation of the manuscript.
The author acknowledges support from a Postdoctoral Fellowship of the Research Foundation Flanders (FWO), grant 1268323N.

\end{document}